\documentclass[12pt]{article}
\usepackage{amsmath}
\usepackage{amssymb}
\usepackage{mathrsfs}
\usepackage{amsthm}
\usepackage{authblk}
\usepackage{color}
\usepackage{graphicx}
\usepackage{caption}
\usepackage{subcaption}
\usepackage{makecell}
\usepackage{soul}
\usepackage{bbm} 
\usepackage{ulem}
\usepackage{cancel} 
\usepackage{algorithm}
\usepackage{algorithmicx}
\usepackage{amssymb}
\usepackage{threeparttable}
\usepackage{titlesec}
\usepackage{tikz}
\usepackage{bbding}
\usepackage{pifont}
\usepackage{titletoc}
\usepackage{hhline}
\usepackage{bbm}
\usepackage{adjustbox}

\theoremstyle{definition}
\newtheorem{definition}{Definition}
\newtheorem{theorem}{Theorem}
\newtheorem{lemma}{Lemma}

\newtheorem{corollary}{Corollary}

\newtheorem{fact}{Fact}
\usepackage{hyperref}
\usepackage[margin=1.2in]{geometry}

\usepackage{hyperref}

\newcommand{\x}{\pmb{x}}
\newcommand{\y}{\pmb{y}}

\newcommand{\vt}{\pmb{t}}
\newcommand{\vs}{\pmb{u}}
\newcommand{\veps}{\pmb{\varepsilon}}
\newcommand{\veta}{\pmb{\eta}}

\newcommand{\X}{\pmb{X}}

\newcommand{\E}{\mathbb{E}}

\newcommand{\R}{\mathbb{R}}

\newcommand{\mS}{\mathscr{U}}
\newcommand{\mA}{\mathscr{A}}
\newcommand{\mT}{\mathscr{T}}

\newcommand{\mQ}{\mathscr{Q}}
\newcommand{\gw}{w}
\newcommand{\we}{\widetilde{w}}
\newcommand{\Prob}{\mathbb{P}}

\def\lV{\left\lVert}
\def\rV{\right\lVert}
\def\lv{\left\lvert}
\def\rv{\right\lvert}
\def\lk{\left(}
\def\rk{\right)}
\def\lg{\langle}
\def\rg{\rangle}
\def\lz{\left[}
\def\rz{\right]}

\title{Focused Width in Adversarial Fake Detection: A Separation}
\author{Gao Huang
\footnote{School of Mathematical Science, Zhejiang University, Hangzhou 310027, P. R. China, E-mail address: hgmath@zju.edu.cn}}
\date{}

\begin{document}

\maketitle

\begin{abstract}
We study the adversarial fake detection model of Mendelson, Paouris and Vershynin~\cite{MPV}. 
In this model, a genuine sample is $\X\sim N\lk0,\pmb{I}_n\rk$, while a fake sample is produced as $\X+r\vt\lk{\X}\rk$, where the adversary first observes $\X$ and then chooses an admissible perturbation $\vt\lk{\X}\rk$ from a prescribed set $\mT\subset\R^n$. 
The central quantity is the detectability radius $r\lk\mT\rk$, which formalizes the transition scale at which fake samples become reliably distinguishable from genuine ones.
Mendelson, Paouris and Vershynin introduced the focused width $\we\lk\mT\rk$ as a geometric parameter for this radius and conjectured that, for every origin-symmetric set $\mT$, it characterizes $r\lk\mT\rk$ up to universal constants.

In this note, we disprove this conjecture for a broad class of discrete sets.
More precisely, we consider any origin-symmetric set $\mT_n$ lying between the hypercube and the odd integer grid:
\begin{equation*}
\{-1,1\}^n\subset \mT_n\subset \lk 2\mathbb{Z}+1\rk^n.
\end{equation*}
For every such $\mT_n$, we prove that 
$\frac{\we\lk\mT_n\rk}{r\lk\mT_n\rk }\gtrsim \sqrt{\log n}$. 
Thus, in the Gaussian model, the focused width can overestimate the detectability radius by a $\sqrt{\log n}$ factor and therefore does not characterize it in general.
We further show that this logarithmic scale is not intrinsic: in the corresponding non-Gaussian model with product Laplace data, the focused width benchmark can even exceed the detectability radius by at least a polynomial factor of order $n^{1/4}$.
\end{abstract}

\noindent\textbf{Keywords:} Fake Detection; Adversarial Perturbations; Focused Width; Detectability Radius.

\section{Introduction}

The rapid rise of generative AI has made fake data easy to produce and difficult to detect.
Even in the stylized setting of Gaussian data, deciding whether a sample is genuine or fake can already be subtle.
A key distinction is whether the adversary chooses the perturbation before or after observing the realized sample.
In the outsider setting, the perturbation, or more generally the alternative distribution, is fixed before the sample is drawn.
This viewpoint underlies much of classical Gaussian detection theory, including minimax signal detection~\cite{Baraud2002,chhor2024sparse}, sparse mixture and higher criticism testing~\cite{Donoho2004,Cai2007,Arias02011,arias2019detection,donoho2024impossibility}, sparse regression~\cite{Ingster2010,Carpentier2019}, and combinatorial or network detection~\cite{Addario2010,Arias2011}.
By contrast, in the insider adversarial model introduced by Mendelson, Paouris and Vershynin~\cite{MPV}, the adversary first observes the sample and only then chooses the perturbation.
A related sample-dependent phenomenon also appears in Smirnov's work~\cite{Smirnov2024}.
This adaptivity is the central feature absent from the outsider formulation: it changes the geometry of the testing problem and can make detection substantially harder.

In the model of~\cite{MPV}, a genuine data point is $\X\sim N\lk0,\pmb{I}_n\rk$ in $\R^n$.
Given a prescribed perturbation set $\mT\subset\R^n$, the adversary observes $\X$ and then selects an admissible perturbation, or trick, $\vt\lk\X\rk\in\mT$, which may depend on the realized sample.
The adversary then releases
\begin{equation*}
\X+r\vt\lk{\X}\rk,
\end{equation*}
where $r>0$ measures the corruption scale.
The goal of the adversary is to use this sample dependence to hide the corruption and make $\X+r\vt\lk\X\rk$ statistically indistinguishable from $\X$.
The tester observes only the released point and must decide whether it is a genuine Gaussian sample or was produced by this adversarial procedure.
Thus a successful test should accept most genuine samples while rejecting adversarially shifted samples uniformly over all admissible sample-dependent choices of $\vt\lk\X\rk$.

For small values of $r$, the perturbation $r\vt\lk\X\rk$ is close to the origin, so the corrupted sample is difficult to distinguish from a genuine Gaussian sample.
For large values of $r$, the corruption is expected to create a detectable deviation from the Gaussian distribution.
Thus, for a fixed perturbation set $\mT$, the natural quantitative question is to identify the critical scale at which reliable detection becomes possible, uniformly over all sample-dependent choices of $\vt\lk\X\rk\in\mT$.
This scale is formalized by the \textbf{\textit{detectability radius}}; see~\cite{MPV}.


To define this radius precisely, we represent a test by a measurable set  $\mA\subset\R^n$: points in $\mA$ are declared genuine, and points in $\mA^c$ are declared fake. 
We say that a scale $r>0$ is detectable for $\mT$ if there exists such a set $\mA$ satisfying
\begin{equation}\label{eq:detectable}
\gamma_n\lk\mA\rk\ge 0.9,
\qquad
\gamma_n\lk\mA-r\mT\rk\le 0.1,
\end{equation}
where $\gamma_n$ denotes the standard Gaussian measure on $\R^n$ and
\begin{equation*}
\mA-r\mT:=\left\{\x-r\vt:\x\in\mA,\ \vt\in\mT\right\}.
\end{equation*}
The first condition means that most genuine Gaussian samples are accepted. 
The second condition controls the success probability of any admissible insider adversary:
indeed, $\X\in\mA-r\mT$ if and only if there exists some $\vt\in\mT$ such that $\X+r\vt\in\mA$.
Thus, if $\gamma_n\lk\mA-r\mT\rk\le 0.1$, then no sample-dependent choice of trick can move the original Gaussian sample into the acceptance region with probability larger than $0.1$.

The detectability radius of $\mT$ then cis defined by
\begin{equation}\label{eq:radius}
r\lk\mT\rk
:=\sup\left\{r>0:\text{there is no measurable }\mA\subset\R^n \text{ satisfying }~\eqref{eq:detectable}\right\}
\end{equation}
Thus, $r\lk\mT\rk$ is the supremum of corruption scales for which no test of the above form can reliably distinguish genuine Gaussian samples from adversarially shifted samples.

A central result of~\cite{MPV} relates this transition scale to a rescaled Gaussian width.
For a general set $\mT\subset\R^n$, define
\begin{equation}\label{eq:scaled_width}
\bar{\gw}\lk\mT\rk=
\E\sup_{\vt\in\mT}
\left\langle \pmb{g},\frac{\vt}{\lV\vt\rV_2^2}\right\rangle,
\qquad \pmb{g}\sim N\lk0,\pmb{I}_n\rk .
\end{equation}
This quantity is a rescaled version of the usual Gaussian width~\cite{Vershynin2018}; the normalization by $\lV\vt\rV_2^2$ gives greater weight to tricks closer to the origin, which are harder to detect.
For highly symmetric sets $\mT$, Mendelson, Paouris and Vershynin~\cite{MPV} showed that this scaled Gaussian width characterizes the detectability radius up to universal constants.
More precisely, in their normalization,
 \begin{equation*}
r\lk\mT\rk\approx2\bar{\gw}\lk\mT\rk.
\end{equation*}
Here, highly symmetric means that whenever $\mT$ contains a point $\x$, it also contains every vector $\y$ with the same support and no smaller Euclidean norm, namely
\begin{equation*}
\mathrm{supp}\lk\y\rk=\mathrm{supp}\lk\x\rk,
\qquad
\lV\y\rV_2\ge \lV\x\rV_2.
\end{equation*}
Typical examples are the Euclidean exterior
$\left\{\x\in\R^n:\lV\x\rV_2\ge 1\right\}$
and its $s$-sparse analogue
$\left\{\x\in\R^n:\lv \mathrm{supp}\lk\x\rk\rv\le s,\ \lV\x\rV_2\ge 1\right\}$.

The high-symmetry assumption cannot be omitted.
This is illustrated in Section~5.1 of~\cite{MPV} by a simple example for which the scaled Gaussian width has the wrong order of magnitude.
For $n\ge2$, consider
\begin{equation}\label{eq:1/2}
\mT=\left\{\vt\in\R^n:\lV\vt\rV_2=1,\ \lv t_1\rv=1/2\right\}.
\end{equation}
This set is origin-symmetric  but not highly symmetric.
Moreover, $\bar{\gw}\lk\mT\rk\asymp \sqrt n$, whereas $r\lk\mT\rk=O\lk 1\rk$.
Indeed, every admissible trick shifts the first coordinate by $\pm r/2$, so a one-dimensional test based only on this coordinate detects the perturbation once $r$ is a sufficiently large constant.
Thus, outside the highly symmetric regime, the scaled Gaussian width can dramatically overestimate the true detectability radius.


This failure motivates the~\textbf{\textit{focused width}} introduced in~\cite{MPV}.
\begin{definition}\label{def:focused_width}
For $\mT\subset\R^n$, define
\begin{equation}\label{eq:focused_width}
\we\lk\mT\rk
:=\inf_{\mS}\gw\lk\mS\rk,
\end{equation}
where, for each set $\mS\subset\R^n$,
\begin{equation*}
\gw\lk\mS\rk
:=
\E\sup_{\vs\in\mS}\left\langle \pmb{g},\vs\right\rangle,
\qquad
\pmb{g}\sim{N}\lk0,\pmb{I}_n\rk,
\end{equation*}
and the infimum is taken over all origin-symmetric sets $\mS\subset\R^n$ satisfying the following hitting condition:
\begin{equation}\label{eq:hit}
\forall\, \vt\in\mT\quad \exists\, \vs\in\mS
\quad\text{with}\quad
\left\langle \vt,\vs\right\rangle\ge 1.
\end{equation}
\end{definition}

The set $\mS$ plays the role of a family of detecting directions: the hitting condition~\eqref{eq:hit} requires every trick $\vt\in\mT$ to have a nontrivial projection onto at least one direction in $\mS$.
Thus the focused width measures the Gaussian complexity of a family of detecting witnesses, rather than the Gaussian complexity of the entire rescaled perturbation set.
This refinement is designed to ignore irrelevant directions that may inflate the scaled Gaussian width.
In the example~\eqref{eq:1/2}, this improvement is substantial: $\we\lk\mT\rk=\mathcal{O}\lk1\rk$, whereas $\bar{\gw}\lk\mT\rk\asymp\sqrt{n}$.

This refinement led Mendelson, Paouris and Vershynin to pose the following geometric characterization problem:
does the focused width determine the detectability radius, up to universal constants, for every origin-symmetric perturbation set?
Equivalently, is it true that
\begin{equation}\label{eq:conjecture}
r\lk\mT\rk\asymp \we\lk\mT\rk
\end{equation}
uniformly over all origin-symmetric sets $\mT\subset\R^n$?
Since Theorem~5.3 of~\cite{MPV} shows that sufficiently large multiples of $\we\lk\mT\rk$ are detectable for every origin-symmetric set $\mT$, the conjecture would require the converse lower bound $r\lk\mT\rk\ge c\,\we\lk\mT\rk$ for some universal constant $c>0$ and every origin-symmetric set $\mT\subset\R^n$.

However, we disprove this \textit{focused width conjecture} for a broad class of discrete perturbation sets lying between the hypercube and the odd integer grid.
Let $\mQ_n=\left\{-1,1\right\}^n$ be the discrete hypercube, and let $\mT_n$ be any origin-symmetric set satisfying
\begin{equation}\label{eq:set}
\mQ_n\subset \mT_n\subset \lk2\mathbb{Z}+1\rk^n.
\end{equation}
Our main result is the following.

\begin{theorem}\label{thm:main}
There exists a universal constant $C>0$ such that, for every origin-symmetric set $\mT_n$ satisfying~\eqref{eq:set} and for all sufficiently large $n$,
\begin{equation*}
\frac{\we\lk\mT_n\rk}{r\lk\mT_n\rk}
\ge C\sqrt{\log n}.
\end{equation*}
\end{theorem}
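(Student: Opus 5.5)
The plan is to prove two separate bounds: an upper bound $r\lk\mT_n\rk\lesssim 1/\sqrt{\log n}$, obtained from an explicit periodic test, and a lower bound $\we\lk\mT_n\rk\ge\sqrt{2/\pi}$, obtained by a Gaussian-to-Rademacher comparison. Dividing the two gives the theorem. Both bounds use only the sandwich~\eqref{eq:set}: the odd-grid inclusion drives the upper bound, and the hypercube inclusion drives the lower bound.

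\emph{Upper bound on the radius.} Every trick $\vt\in\mT_n$ has odd integer coordinates, so $\cos\lk\pi\lk x_i+rt_i\rk/r\rk=-\cos\lk\pi x_i/r\rk$ for every $i$, independently of the adversary's choice. I will use the test
\begin{equation*}
\mA=\left\{\y\in\R^n:\ \sum_{i=1}^n\cos\lk\pi y_i/r\rk> 0\right\}.
\end{equation*}
By the sign flip above, $\mA-r\mT_n=\left\{\x:\sum_i\cos\lk\pi x_i/r\rk<0\right\}$ exactly. This set contains $\x-r\mathbf 1$ for every $\x\in\mA$, so it is the reflection of $\mA$. The statistic $S=\sum_i\cos\lk\pi X_i/r\rk$ has mean $n\,e^{-\pi^2/(2r^2)}$, since $\E\cos\lk\lambda g\rk=e^{-\lambda^2/2}$, and variance at most $n$. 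Chebyshev then gives $\gamma_n\lk\mA\rk\ge0.9$ and $\gamma_n\lk\mA-r\mT_n\rk\le0.1$ as soon as $n\,e^{-\pi^2/(2r^2)}\ge\sqrt{10n}$. This holds whenever $r\ge C_0/\sqrt{\log n}$ and $n$ is large. Hence every such $r$ is detectable, and $r\lk\mT_n\rk\le C_0/\sqrt{\log n}$.

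\emph{Lower bound on the focused width.} First, $\we$ is monotone. Since $\mQ_n\subset\mT_n$, any $\mS$ satisfying the hitting condition~\eqref{eq:hit} for $\mT_n$ also satisfies it for $\mQ_n$, so $\we\lk\mT_n\rk\ge\we\lk\mQ_n\rk$. Now fix an origin-symmetric $\mS$ hitting every $\veps\in\{-1,1\}^n$. Write $\pmb g=\veps\odot\lv\pmb g\rv$ with $\veps$ a Rademacher vector independent of $\lv\pmb g\rv$. Conditioning on $\veps$ and applying Jensen (a supremum of expectations is at most the expectation of the supremum), we get
\begin{equation*}
\gw\lk\mS\rk=\E_{\veps}\E_{\lv \pmb g\rv}\sup_{\vs\in\mS}\sum_i\varepsilon_i\lv g_i\rv u_i\ \ge\ \sqrt{2/\pi}\ \E_{\veps}\sup_{\vs\in\mS}\left\langle\veps,\vs\right\rangle\ \ge\ \sqrt{2/\pi}.
\end{equation*}
The last inequality uses the hitting condition pointwise in $\veps$. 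Taking the infimum over $\mS$ gives $\we\lk\mT_n\rk\ge\sqrt{2/\pi}$.

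Combining the two bounds yields $\we\lk\mT_n\rk/r\lk\mT_n\rk\ge\sqrt{2/\pi}\,C_0^{-1}\sqrt{\log n}$. I expect the main obstacle to be choosing a test that works uniformly over arbitrary subsets of the odd grid, not just the cube. The periodic statistic $\cos\lk\pi y_i/r\rk$ handles this, because the parity constraint makes it flip sign deterministically for every admissible trick. The remaining care is in the Chebyshev constants and in the measurability of $\mA$, which is open, so the latter is immediate.
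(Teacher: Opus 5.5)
Your proof is correct and follows the paper's argument. For the radius bound you use the same periodic statistic $\sum_i\cos(\pi x_i/r)$ with the parity sign-flip, which gives $\mA-r\mT_n=\{F_r<0\}$; for $\we(\mT_n)\ge\sqrt{2/\pi}$ you use the same decomposition $\pmb g=\veps\odot|\pmb g|$ with Jensen, since your ``sup of expectations'' step is exactly Jensen for the convex support function $h_{\mS}$.

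The one difference is that you use Chebyshev where the paper uses Hoeffding; Chebyshev suffices at the fixed error level $0.1$ and still gives $r(\mT_n)\lesssim 1/\sqrt{\log n}$. Also, $\{F_r<0\}$ is the translate $\mA-r\mathbf{1}$, not a reflection of $\mA$, though this wording slip does not affect the argument.
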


Theorem~\ref{thm:main} rules out the conjectured universal lower bound in~\cite{MPV}: for these origin-symmetric discrete perturbation sets, the focused width overestimates the true detectability radius by at least a factor of order $\sqrt{\log n}$.


We prove Theorem~\ref{thm:main} in Section~\ref{sec:main}.
The proof has two ingredients.
The first is a dimension-free lower bound on the focused width for the class of perturbation sets considered here:
$\we\lk\mT_n\rk\gtrsim 1$.
The second ingredient is a periodic detection mechanism that is invisible from the linear geometric viewpoint underlying the focused width.
Inspired by Smirnov's reduction of hypercube detection to adversarial coin testing~\cite{Smirnov2024}, we consider the testing function
\begin{equation*}
F_r\lk\x\rk=\sum_{j=1}^n \cos\lk \frac{\pi x_j}{r}\rk .
\end{equation*}
Every shift $r\vt$ with $\vt\in\lk2\mathbb{Z}+1\rk^n$ moves each coordinate by an odd number of half periods and therefore flips the sign of $F_r$.
Thus the same periodic test works simultaneously for every trick vector in any set $\mT_n\subset\lk2\mathbb{Z}+1\rk^n$.
A Gaussian concentration argument then shows that this periodic test yields detectability for corruption scales at most of order $1/\sqrt{\log n}$.
Combining this periodic detection estimate with the dimension-free lower bound on $\we\lk\mT_n\rk$ proves Theorem~\ref{thm:main}.

The logarithmic separation in Theorem~\ref{thm:main} reflects a feature of the Gaussian setting rather than an intrinsic limitation of adversarial fake data detection.
In Section~\ref{sec:nongaussian}, we show that, for non-Gaussian data, the focused width prediction can fail by a substantially larger margin.
More precisely, we consider product distributions and prove that, for product Laplace data, the focused width benchmark can exceed the detectability radius by at least a polynomial factor of order $n^{1/4}$.


\section{Proof of Main Result}\label{sec:main}

\subsection{Focused Width}

We first compute the focused width of all sets lying between the hypercube and the odd integer grid.

\begin{lemma}\label{lm:focused}
For every set $\mT_n$ satisfying $\mQ_n\subset \mT_n\subset \lk2\mathbb{Z}+1\rk^n$, one has
\begin{equation}\label{eq:focused_cube}
    \we\lk\mT_n\rk=\sqrt{\frac{2}{\pi}}.
\end{equation}
\end{lemma}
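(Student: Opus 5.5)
Upper bound. Take $\mS=\{\pm e_1\}$, which is origin-symmetric. Every $\vt\in\mT_n\subset(2\mathbb{Z}+1)^n$ has an odd integer first coordinate, so $\lv t_1\rv\ge1$. Choosing $\vs=\mathrm{sign}(t_1)e_1$ then gives $\lg\vt,\vs\rg=\lv t_1\rv\ge1$, so the hitting condition~\eqref{eq:hit} holds. The width of this set is
\[
\gw(\mS)=\E\lv g_1\rv=\sqrt{2/\pi},
\]
and therefore $\we(\mT_n)\le\sqrt{2/\pi}$.

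Lower bound. Since $\mQ_n\subset\mT_n$, any $\mS$ that hits $\mT_n$ also hits $\mQ_n$, so it suffices to show that every origin-symmetric $\mS$ hitting $\mQ_n$ satisfies $\gw(\mS)\ge\sqrt{2/\pi}$. The idea is to test against a random vertex that is correlated with $\pmb g$.

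\begin{enumerate}
\item Set $\vt(\pmb g)=\mathrm{sign}(\pmb g)\in\mQ_n$, with ties broken arbitrarily; ties have probability zero.
\item By the hitting condition, choose measurably $\vs(\pmb g)\in\mS$ with $\lg\vt(\pmb g),\vs(\pmb g)\rg\ge1$. Since $\vt(\pmb g)$ takes only $2^n$ values, this is a selection from finitely many points and measurability is harmless.
\item Write $\vs=\vs(\pmb g)$ and decompose $\lg\pmb g,\vs\rg=\sum_j\lv g_j\rv\,\mathrm{sign}(g_j)\,s_j$.
\end{enumerate}

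One cannot directly conclude $\E\lg\pmb g,\vs\rg\ge\E\lv g_1\rv$, because $\vs$ depends on $\pmb g$. The natural fix is to symmetrize over magnitudes.

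\begin{enumerate}
\setcounter{enumi}{3}
\item Condition on the sign vector $\veps=\mathrm{sign}(\pmb g)$, which is independent of the magnitudes $\lv g_j\rv$, each of which has mean $\sqrt{2/\pi}$.
\item Let $\vs_{\veps}$ be a fixed choice depending only on $\veps$. Then
\[
\E\Bigl[\sup_{\vs\in\mS}\lg\pmb g,\vs\rg\Bigm|\veps\Bigr]\ge\E\Bigl[\lg\pmb g,\vs_{\veps}\rg\Bigm|\veps\Bigr]=\sqrt{2/\pi}\sum_j\varepsilon_j(\vs_{\veps})_j\ge\sqrt{2/\pi}.
\]
\item Take the expectation over $\veps$ to obtain $\gw(\mS)\ge\sqrt{2/\pi}$.
\end{enumerate}

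Together the two bounds give~\eqref{eq:focused_cube}. Origin-symmetry of $\mS$ is not even needed for the lower bound.

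The main subtlety is step 5: the choice of $\vs$ must depend only on the signs, not on the full $\pmb g$, so that conditional independence gives the exact factor $\sqrt{2/\pi}$. One also needs $\E\sup_{\vs\in\mS}$ to be well defined, for example as a supremum over finite subsets if $\mS$ is uncountable. With the selection fixed in this way, the rest is routine.
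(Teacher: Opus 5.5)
Your proof is correct and matches the paper's lower-bound argument: the paper also writes $\pmb g$ as independent signs times magnitudes and conditions on the sign vector. Your choice of a fixed witness $\vs_{\veps}\in\mS$ is just the unpacked form of the paper's Jensen step for the support function $h_{\mS}$. For the upper bound you use $\{\pm e_1\}$ where the paper uses $n^{-1}\mQ_n$; both hit $\mT_n$ and both have width exactly $\sqrt{2/\pi}$, and yours shows that the focused width is already attained by a single-coordinate linear test.
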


\begin{proof}
We first prove the lower bound. 
Let $\mS\subset\R^n$ be an arbitrary origin-symmetric set satisfying the hitting condition~\eqref{eq:hit} with $\mT=\mT_n$.
Define its support function by
\begin{equation*}
    h_{\mS}\lk \x\rk=\sup_{\vs\in\mS}\lg \x,\vs\rg.
\end{equation*}
We may assume $\gw\lk\mS\rk<\infty$. 
The support function $h_{\mS}$ is convex and positively homogeneous; moreover, since $\mS=-\mS$, it is nonnegative.

Since $\mQ_n\subset\mT_n$, the hitting condition for $\mT_n$ implies
\begin{equation}\label{eq:1}
    h_{\mS}\lk\veps\rk\ge 1
\end{equation}
for every $\veps\in\mQ_n$.
Let $\veta=\lk\eta_1,\ldots,\eta_n\rk$ be a random Rademacher vector, and let
$\pmb{\alpha}=\lk\alpha_1,\ldots,\alpha_n\rk$, where $\alpha_1,\ldots,\alpha_n$ are independent copies of $\lv N\lk0,1\rk\rv$, independent of $\veta$. 
Then
\begin{equation*}
    \pmb{g}\stackrel{d}{=}\veta\odot\pmb{\alpha},
    \qquad
\pmb{g}\sim{N}\lk0,\pmb{I}_n\rk,
\end{equation*}
where $\odot$ denotes coordinatewise multiplication.

Conditioning on $\veta$ and applying Jensen's inequality to the convex function $h_{\mS}$, we obtain
\begin{equation}\label{eq:E}
\begin{aligned}
\E_{\pmb{\alpha}} h_{\mS}\lk\veta\odot\pmb{\alpha}\rk
&\ge h_{\mS}\lk \E_{\pmb{\alpha}}\lk\veta\odot\pmb{\alpha}\rk\rk  \\
&= h_{\mS}\lk \sqrt{\frac{2}{\pi}}\,\veta\rk= \sqrt{\frac{2}{\pi}}\, h_{\mS}\lk\veta\rk ,
\end{aligned}
\end{equation}
where we used $\E\lv N\lk0,1\rk\rv=\sqrt{2/\pi}$ and the homogeneity of $h_{\mS}$. 
Taking expectation with respect to $\veta$ and using~\eqref{eq:1} gives
\begin{equation*}
\begin{aligned}
\gw\lk\mS\rk
=\E h_{\mS}\lk\pmb{g}\rk
&=\E_{\veta}\E_{\pmb{\alpha}} h_{\mS}\lk\veta\odot\pmb{\alpha}\rk\\
&\ge \sqrt{\frac{2}{\pi}}\,\E_{\veta} h_{\mS}\lk\veta\rk
\ge \sqrt{\frac{2}{\pi}}.
\end{aligned}
\end{equation*}
Since $\mS$ was arbitrary, taking the infimum over all origin-symmetric sets satisfying the hitting condition gives
\begin{equation}\label{eq:focused_lower}
    \we\lk\mT_n\rk\ge \sqrt{\frac{2}{\pi}}.
\end{equation}

It remains to prove the matching upper bound. 
Take $\mS_0=n^{-1}\mQ_n$.
Since $\mQ_n=-\mQ_n$, the set $\mS_0$ is origin-symmetric.
We claim that $\mS_0$ satisfies the hitting condition for $\mT_n$.
Indeed, for every $\vt=\lk t_1,\ldots,t_n\rk\in\mT_n$, the inclusion $\mT_n\subset\lk2\mathbb{Z}+1\rk^n$ implies that each $t_j$ is a nonzero odd integer, and hence $\lv t_j\rv\ge 1$.
Choose
\begin{equation*}
\vs=\frac{1}{n}\operatorname{sgn}\lk\vt\rk\in n^{-1}\mQ_n,
\end{equation*}
where $\operatorname{sgn}\lk\vt\rk:=\lk\operatorname{sgn}\lk t_1\rk,\ldots,\operatorname{sgn}\lk t_n\rk\rk$.
Then
\begin{equation*}
\lg \vt,\vs\rg
=
\frac{1}{n}\sum_{j=1}^n \lv t_j\rv
\ge 1.
\end{equation*}
Thus $\mS_0$ satisfies the hitting condition for $\mT_n$.
Therefore
\begin{align*}
\we\lk\mT_n\rk
\le \gw\lk n^{-1}\mQ_n\rk&=\frac{1}{n}\E\sup_{\veps\in\{-1,1\}^n}\lg \pmb{g},\veps\rg 
=\frac{1}{n}\E\sum_{i=1}^n \lv g_i\rv =\sqrt{\frac{2}{\pi}}.
\end{align*}
Combining this upper bound with~\eqref{eq:focused_lower} proves~\eqref{eq:focused_cube}.
\end{proof}

\subsection{A Periodic Detection Test}

We now prove an upper bound for the detectability radius of any perturbation set lying between the hypercube and the odd integer grid.
The construction is inspired by Smirnov's periodic coin testing argument for hypercube attacks~\cite{Smirnov2024}.
In that argument, a periodic partition of the real line turns the Gaussian detection problem into a coin testing problem in which the adversary flips many labels.
Here we use a smooth Fourier analogue of this idea: the periodic observable $\cos\lk \pi x/r\rk$ changes sign under every shift by an odd integer multiple of $r$.
Fix $r>0$. 
Define the testing function by
\begin{equation}
F_r\lk \x\rk
=\sum_{j=1}^n \cos\lk \frac{\pi x_j}{r}\rk,
\end{equation}
and define the acceptance region
\begin{equation*}
\mA_r=\{\x\in\R^n:F_r\lk \x\rk>0\}.
\end{equation*}

\begin{lemma}\label{lm:detect}
Let $\mT_n\subset\R^n$ satisfy $\mQ_n\subset \mT_n\subset \lk2\mathbb{Z}+1\rk^n$.
Let $\X\sim N\lk 0,\pmb{I}_n\rk$.
Let $u>1$ and assume that $n>2\log u$.
If
\begin{equation}\label{eq:r}
r\ge
\frac{\pi}{\log^{1/2}\lk \frac{n}{2\log u}\rk},
\end{equation}
then the acceptance region $\mA_r$ defined above satisfies
\begin{equation}\label{eq:measure}
\Prob\left\{ \X\in \mA_r\right\}\ge 1-\frac{1}{u}
\quad \text{and}\quad
\Prob \left\{ \X\in\mA_r-r\mT_n\right\} \le \frac{1}{u}.
\end{equation}
\end{lemma}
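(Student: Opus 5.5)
First I will reduce the second inequality in~\eqref{eq:measure} to the first, using the sign-flip symmetry. For $\vt\in\mT_n\subset\lk2\mathbb{Z}+1\rk^n$, each coordinate shift $r t_j$ is an odd multiple of $r$. Since $\cos$ has period $2\pi$ and $\cos\lk\theta+\pi k\rk=-\cos\theta$ for odd $k$, we get
\begin{equation*}
F_r\lk\x+r\vt\rk=-F_r\lk\x\rk\qquad\text{for every }\x\in\R^n,\ \vt\in\mT_n .
\end{equation*}
Hence $\x\in\mA_r-r\mT_n$ holds iff $F_r\lk\x+r\vt\rk>0$ for some $\vt\in\mT_n$. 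Since $\mT_n\neq\emptyset$, this is equivalent to $F_r\lk\x\rk<0$. It therefore suffices to prove $\Prob\{F_r\lk\X\rk\le 0\}\le 1/u$. This single bound gives both $\Prob\{\X\in\mA_r\}\ge 1-1/u$ and $\Prob\{\X\in\mA_r-r\mT_n\}\le\Prob\{F_r\lk\X\rk<0\}\le 1/u$.

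Next I will compute the mean. The summands $\cos\lk\pi X_j/r\rk$ are i.i.d., bounded in $[-1,1]$, and by the Gaussian characteristic function
\begin{equation*}
\mu:=\E\cos\lk\frac{\pi X_1}{r}\rk=e^{-\pi^2/(2r^2)}>0 .
\end{equation*}
So $\E F_r\lk\X\rk=n\mu$. Hoeffding's inequality for $n$ independent variables with range $2$ then gives
\begin{equation*}
\Prob\{F_r\lk\X\rk\le 0\}\le\exp\lk-\frac{2\lk n\mu\rk^2}{4n}\rk=\exp\lk-\frac{n\mu^2}{2}\rk=\exp\lk-\frac n2 e^{-\pi^2/r^2}\rk .
\end{equation*}

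Finally I will plug in the hypothesis~\eqref{eq:r}. It gives $\pi^2/r^2\le\log\lk n/(2\log u)\rk$, so $e^{-\pi^2/r^2}\ge 2\log u/n$. The exponent is then at least $\log u$, and the bound becomes $\le 1/u$. The assumption $n>2\log u$ only ensures the logarithm in~\eqref{eq:r} is positive.

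The main obstacle is the choice of concentration inequality. Hoeffding gives the variance proxy $n/2$ for a sum in $[-n,n]$, and this exactly matches the constants in~\eqref{eq:r}. A Gaussian-concentration (Lipschitz) approach would fail here: $F_r$ has Lipschitz constant about $\pi\sqrt n/r$, which is too large for small $r$. If one wants to respect the paper's phrase ``Gaussian concentration'', the bounded-differences route applied to the independent coordinates is the correct substitute.
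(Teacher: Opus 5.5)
Your proof is correct and follows the paper's argument essentially step for step. You use the sign-flip identity to reduce both bounds to $\Prob\{F_r(\X)\le 0\}\le 1/u$ (the paper's Fact~\ref{prop:flip}), compute the mean via the Gaussian characteristic function, and apply Hoeffding to get $\exp\bigl(-\tfrac n2 e^{-\pi^2/r^2}\bigr)$ (Fact~\ref{prop:concentration}) before substituting the hypothesis on $r$; like you, the paper uses Hoeffding rather than Lipschitz Gaussian concentration, despite the introduction's wording.
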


The proof of Lemma~\ref{lm:detect} is based on two ingredients. 
The first ingredient is a deterministic sign-flipping property of $F_r$.

\begin{fact}\label{prop:flip}
For every $\x\in\R^n$ and every $\vt\in\lk2\mathbb{Z}+1\rk^n$,
\begin{equation}\label{eq:flip}
F_r\lk \x+r\vt\rk=-F_r\lk \x\rk.
\end{equation}
Consequently, for every nonempty set $\mT_n\subset\lk2\mathbb{Z}+1\rk^n$,
\begin{equation}\label{eq:minkowski}
\mA_r-r\mT_n=\{\x\in\R^n:F_r\lk \x\rk<0\}.
\end{equation}
\end{fact}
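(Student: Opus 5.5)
The plan has two parts: first the pointwise identity~\eqref{eq:flip}, then the set identity~\eqref{eq:minkowski} as a consequence.

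For~\eqref{eq:flip}, we work coordinate by coordinate. Fix $\x\in\R^n$ and $\vt\in\lk2\mathbb{Z}+1\rk^n$, and write $t_j=2k_j+1$ with $k_j\in\mathbb{Z}$. Then
\begin{equation*}
\cos\lk\frac{\pi\lk x_j+rt_j\rk}{r}\rk=\cos\lk\frac{\pi x_j}{r}+2\pi k_j+\pi\rk=-\cos\lk\frac{\pi x_j}{r}\rk .
\end{equation*}
Here we used $2\pi$-periodicity of the cosine and the identity $\cos\lk\theta+\pi\rk=-\cos\theta$. Summing over $j=1,\ldots,n$ gives $F_r\lk\x+r\vt\rk=-F_r\lk\x\rk$.

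For~\eqref{eq:minkowski}, we prove the two inclusions separately. Recall that $\y\in\mA_r-r\mT_n$ if and only if $\y+r\vt\in\mA_r$ for some $\vt\in\mT_n$; the element $\x=\y+r\vt\in\mA_r$ witnesses the representation $\y=\x-r\vt$.

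First suppose $\y\in\mA_r-r\mT_n$, so $\y=\x-r\vt$ with $F_r\lk\x\rk>0$ and $\vt\in\mT_n$. Since $-\vt$ is also a vector of odd integers, applying~\eqref{eq:flip} with $-\vt$ gives $F_r\lk\y\rk=-F_r\lk\x\rk<0$. Note that this direction does not use origin-symmetry of $\mT_n$; it only uses that $\lk2\mathbb{Z}+1\rk^n$ is closed under negation.

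Conversely, suppose $F_r\lk\y\rk<0$. Because $\mT_n$ is nonempty, we may pick any $\vt\in\mT_n$ and set $\x=\y+r\vt$. Then~\eqref{eq:flip} gives $F_r\lk\x\rk=-F_r\lk\y\rk>0$, so $\x\in\mA_r$. Hence $\y=\x-r\vt\in\mA_r-r\mT_n$.

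There is no real obstacle in this argument. The only points needing care are to apply~\eqref{eq:flip} to $-\vt$ rather than $\vt$, and to use nonemptiness of $\mT_n$ in the reverse inclusion. It is also worth noting that the zero set $\{F_r=0\}$ belongs to neither $\mA_r$ nor $\mA_r-r\mT_n$, which is consistent with the strict inequalities in the two definitions.
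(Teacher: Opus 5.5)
Your proof is correct and follows essentially the same route as the paper. Both use the coordinatewise identity $\cos(\theta+\pi t_j)=-\cos\theta$ for odd $t_j$, then the characterization $\y\in\mA_r-r\mT_n \iff \y+r\vt\in\mA_r$ for some $\vt\in\mT_n$, with nonemptiness of $\mT_n$ used in the reverse direction. The paper writes this as one chain of equivalences and applies \eqref{eq:flip} at the point $\y$ with $\vt$ itself, so your detour through $-\vt$ is harmless but unnecessary.
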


\begin{proof}
For each coordinate $j$, since $t_j\in2\mathbb{Z}+1$, we have
\begin{equation*}
\begin{aligned}
\cos\lk \frac{\pi\lk x_j+r t_j\rk}{r}\rk
=\cos\lk \frac{\pi x_j}{r}+\pi t_j\rk =-\cos\lk \frac{\pi x_j}{r}\rk .
\end{aligned}
\end{equation*}
Summing this identity over $j=1,\dots,n$ gives \eqref{eq:flip}.

It remains to identify the shifted set. 
Fix $\x\in\mathbb{R}^n$. 
By the definition of this shifted set, $\x\in\mA_r-r\mT_n$ if and only if there exists $\vt\in\mT_n$ such that $\x+r\vt\in\mA_r$.
This is equivalent to requiring
$F_r\lk\x+r\vt\rk>0$ for some $\vt\in\mT_n$. 
By~\eqref{eq:flip}, for every $\vt\in\mT_n$ we have $F_r\lk\x+r\vt\rk=-F_r\lk\x\rk$.
Since $\mT_n$ is nonempty, the above condition is equivalent to $F_r\lk\x\rk<0$.
This proves~\eqref{eq:minkowski}.

\end{proof}

The second ingredient is a concentration estimate for $F_r\lk \X\rk$ under the standard Gaussian measure.

\begin{fact}\label{prop:concentration}
Let $r>0$ and let $\X\sim N\lk 0,\pmb{I}_n\rk$. 
Then
\begin{equation}\label{eq:prob}
\Prob\lk F_r\lk \X\rk\le 0\rk
\le
\exp\lz-\frac{n}{2}\exp\lk-\frac{\pi^2}{r^2}\rk\rz.
\end{equation}
\end{fact}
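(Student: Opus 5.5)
We will prove \eqref{eq:prob} by applying a Chernoff bound to the sum of independent bounded variables $Y_j=\cos\lk \pi X_j/r\rk$. The Gaussian characteristic function gives
\begin{equation*}
\mu:=\E Y_j=\E\cos\lk \frac{\pi X_j}{r}\rk=\exp\lk-\frac{\pi^2}{2r^2}\rk,
\end{equation*}
so $\E F_r\lk\X\rk=n\mu>0$. The target exponent is $\frac{n}{2}\mu^2$, since $\mu^2=\exp\lk-\pi^2/r^2\rk$. This is exactly what Hoeffding's inequality produces for variables in $[-1,1]$: the range has length $2$, so
\begin{equation*}
\Prob\lk \sum_j \lk Y_j-\mu\rk\le -n\mu\rk\le \exp\lk-\frac{2\lk n\mu\rk^2}{n\cdot 2^2}\rk=\exp\lk-\frac{n\mu^2}{2}\rk .
\end{equation*}

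The event $\{F_r\lk\X\rk\le 0\}$ is precisely $\{\sum_j\lk Y_j-\mu\rk\le -n\mu\}$, so the displayed inequality already yields \eqref{eq:prob}.

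The steps are therefore as follows. First, use independence of the coordinates of $\X$, so that the $Y_j$ are i.i.d. and take values in $[-1,1]$. Second, compute $\mu$ through $\E e^{i\theta X_j}=e^{-\theta^2/2}$ with $\theta=\pi/r$. Third, apply the one-sided Hoeffding bound, whose Chernoff proof uses $\E e^{-\lambda\lk Y_j-\mu\rk}\le e^{\lambda^2/2}$ and the choice $\lambda=\mu$. Finally, rewrite $\mu^2=\exp\lk-\pi^2/r^2\rk$.

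The only point requiring any care is to match the constant in Hoeffding's lemma, which is $\lk b-a\rk^2/8=1/2$ for the interval $[-1,1]$, so that the exponent comes out as exactly $\frac n2 e^{-\pi^2/r^2}$. Nothing else is an obstacle.
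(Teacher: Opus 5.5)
Your proposal is correct and follows the paper's proof essentially verbatim. Like the paper, you compute $\E\cos\lk \pi X_j/r\rk=\exp\lk-\pi^2/(2r^2)\rk$ via the Gaussian characteristic function, then apply Hoeffding's inequality to the independent $[-1,1]$-valued summands at deviation $n\mu$, which yields $\exp\lk-n\mu^2/2\rk=\exp\lk-\frac{n}{2}e^{-\pi^2/r^2}\rk$.
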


\begin{proof}
Let
\begin{equation*}
Z_j=\cos\lk \frac{\pi X_j}{r}\rk,
\qquad j=1,\dots,n.
\end{equation*}
Then the random variables $\{ Z_j\}_{j=1}^n$ are independent and take values in $[-1,1]$. 
By the characteristic function of a standard Gaussian variable, we obtain
\begin{equation*}
\begin{split}
\E Z_j
&=\E\cos\lk \frac{\pi X_j}{r}\rk
=\E\Re\lz \exp\lk \mathrm{i}\frac{\pi X_j}{r}\rk \rz\\
&=\Re\lz \E\exp\lk \mathrm{i}\frac{\pi X_j}{r}\rk \rz
=\exp\lk-\frac{\pi^2}{2r^2}\rk
=:\nu.
\end{split}
\end{equation*}
Hence, $\E F_r\lk \X\rk=n\nu$.
Applying Hoeffding's inequality~\cite[Theorem 2.2.6]{Vershynin2018} to the centered variables $\{ Z_j-\nu\}_{j=1}^n$ gives
\begin{equation*}
\begin{aligned}
\Prob\lk F_r\lk \X\rk\le 0\rk
&=\Prob\lk\sum_{j=1}^n \lk Z_j-\nu\rk\le-n\nu\rk\\        
&\le\exp\lk-\frac{n\nu^2}{2}\rk                          
=\exp\lz-\frac{n}{2}\exp\lk -\frac{\pi^2}{r^2}\rk\rz.
\end{aligned}
\end{equation*}
\end{proof}

We are now ready to prove Lemma~\ref{lm:detect}.

\begin{proof}[Proof of Lemma~\ref{lm:detect}]

By assumption~\eqref{eq:r} we have
\begin{equation*}
\exp\lk-\frac{\pi^2}{r^2}\rk
\ge
\frac{2\log u}{n}.
\end{equation*}
Combining this with Fact~\ref{prop:concentration} gives
\begin{equation*}
\Prob\{F_r\lk \X\rk\le 0\}
\le \frac{1}{u}.
\end{equation*}
It follows that
\begin{equation*}
\Prob\left\{\X\in \mA_r\right\}=
\Prob\lk F_r\lk \X\rk>0\rk=
1-\Prob\lk F_r\lk \X\rk\le 0\rk\ge1-\frac{1}{u}.
\end{equation*}
On the other hand, by Fact~\ref{prop:flip}, we obtain
\begin{equation*}
\Prob\left\{\X\in \mA_r-r\mT_n\right\}
=\Prob\lk F_r\lk \X\rk<0\rk\le
\Prob\lk F_r\lk \X\rk\le 0\rk\le \frac{1}{u}.
\end{equation*}
Thus~\eqref{eq:measure} holds.
\end{proof}

\subsection{Proof of Theorem~\ref{thm:main}}

Applying Lemma~\ref{lm:detect} with $u=10$ shows that, for all sufficiently large $n$, every $r\ge \frac{\pi}{\sqrt{\log\lk \frac{n}{2\log 10}\rk}}$ is detectable. 
Therefore, by the definition of the detectability radius in~\eqref{eq:radius},
\begin{equation*}
r\lk\mT_n\rk\le
\frac{\pi}{\sqrt{\log\lk \frac{n}{2\log 10}\rk}}\lesssim \frac{1}{\sqrt{\log n}}.
\end{equation*}
Combining this estimate with Lemma~\ref{lm:focused}, which gives $\we\lk\mT_n\rk\asymp 1$, yields
\begin{equation*}
\frac{\we\lk\mT_n\rk }{r\lk\mT_n\rk}
\gtrsim
\sqrt{\log n}.
\end{equation*}
This proves Theorem~\ref{thm:main}.

\section{Non-Gaussian Data: A Polynomial Separation}\label{sec:nongaussian}

In Section~\ref{sec:main}, we showed that for Gaussian data the focused width can exceed the detectability radius by a factor of order at least $\sqrt{\log n}$.
A natural question is whether this $\sqrt{\log n}$ separation is intrinsic, in the sense that the separation factor cannot be improved beyond logarithmic order. 
In this section, we show that this is not the case once one moves beyond the Gaussian setting. 
We consider product data distributions, so that the coordinates are independent. 
In particular, for product Laplace data we obtain a stronger separation at least a factor of order $n^{1/4}$.

Let $\X$ be a random vector in $\R^n$. 
For a set $\mT\subset\R^n$, we say that a scale $r>0$ is $\X$-detectable if there exists a measurable set $\mA\subset\R^n$ such that
\begin{equation}\label{eq:X_detectable}
        \Prob\left\{\X\in\mA\right\}\ge 0.9,
        \qquad
        \Prob\left\{\X\in\mA-r\mT\right\}\le 0.1.
\end{equation}
The corresponding detectability radius is defined by
\begin{equation}\label{eq:X_radius}
        r_{\X}\lk\mT\rk
        :=
        \sup\left\{ r>0: \text{there is no measurable }\mA\subset\R^n \text{ satisfying}~\eqref{eq:X_detectable}\right\}.
\end{equation}
When $\X\sim N\lk0,\pmb{I}_n\rk$, this reduces to the Gaussian detectability radius $r\lk\mT\rk$ in~\eqref{eq:radius}.

Following the distribution-dependent formulation of focused width in~\cite{MPV}, we replace the Gaussian width by the width induced by the genuine data distribution. 
For a random vector $\X$ in $\R^n$ and a set $\mS\subset\R^n$, define
\begin{equation*}
        w_{\X}\lk\mS\rk:=\E\sup_{\vs\in\mS}\lg \X,\vs\rg.
\end{equation*}
The focused $\X$-width of $\mT$ is then defined by
\begin{equation}
        \we_{\X}\lk\mT\rk:=\inf_{\mS} w_{\X}\lk\mS\rk,
\end{equation}
where the infimum is taken over all origin-symmetric sets $\mS\subset\R^n$ satisfying the hitting condition
\begin{equation}\label{eq:hit2}
        \forall\,\vt\in\mT\quad \exists\,\vs\in\mS
        \quad\text{such that}\quad
        \lg \vt,\vs\rg\ge 1.
\end{equation}
When $\X\sim N\lk0,\pmb{I}_n\rk$, this reduces to the focused width $\we\lk\mT\rk$ defined in~\eqref{eq:focused_width}.

We next introduce the distribution-dependent scale at which the periodic cosine test becomes effective.
Let $\mu$ be a probability measure on $\R$ and $X\sim\mu$.
Define
\begin{equation}\label{eq:threshold}
        \rho_{n}
        :=
        \inf\left\{ \rho>0: \text{ for every } r\ge \rho,\lv \E\cos\lk \frac{\pi X}{r}\rk\rv\ge\sqrt{\frac{2\log 10}{n}} \right\}.
\end{equation}
This quantity is the threshold such that, for every $r$ above it, the periodic test succeeds with error probability at most $0.1$.

The following theorem gives the corresponding estimate for product data distributions:
the detectability radius is bounded by the threshold $\rho_n$, while the focused $\X$-width equals the first absolute moment.

\begin{theorem}\label{thm:product}
Let $\mu$ be a symmetric probability measure on $\R$ and $X\sim\mu$.
Let $a:=\E\lv X\rv<\infty$.
For each $n$, let $\X\sim\mu^{\otimes n}$, and let $\mT_n\subset\R^n$ be an origin-symmetric set satisfying~\eqref{eq:set}. 
Then, for all sufficiently large $n$,
\begin{equation*}
        r_{\X}\lk\mT_n\rk\le\rho_{n},
        \qquad
        \we_{\X}\lk\mT_n\rk=a.
\end{equation*}
Consequently,
\begin{equation*}
        \frac{\widetilde w_{\X}\lk\mT_n\rk}
        {r_{\X}\lk\mT_n\rk}
        \ge
        \frac{a}{\rho_{n}}.
\end{equation*}
\end{theorem}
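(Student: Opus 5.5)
The proof splits into two independent parts that mirror Section~\ref{sec:main}: the focused $\X$-width is handled as in Lemma~\ref{lm:focused}, and the radius is handled as in Lemma~\ref{lm:detect}.

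\textbf{Focused width.} For the lower bound, let $\mS$ be origin-symmetric, satisfy the hitting condition~\eqref{eq:hit2}, and have $w_{\X}\lk\mS\rk<\infty$. Its support function $h_{\mS}$ is convex, positively homogeneous and nonnegative, and $h_{\mS}\lk\veps\rk\ge1$ for every $\veps\in\mQ_n$. Since $\mu$ is symmetric, the product structure gives
\begin{equation*}
\X\stackrel{d}{=}\veta\odot\pmb{\alpha},
\end{equation*}
where $\veta$ is a Rademacher vector and $\pmb{\alpha}$ has i.i.d.\ coordinates distributed as $\lv X\rv$, independent of $\veta$. Conditioning on $\veta$ and applying Jensen's inequality gives $\E_{\pmb{\alpha}}h_{\mS}\lk\veta\odot\pmb{\alpha}\rk\ge a\,h_{\mS}\lk\veta\rk\ge a$, hence $w_{\X}\lk\mS\rk\ge a$. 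For the upper bound, take $\mS_0=n^{-1}\mQ_n$. The hitting argument is unchanged from Lemma~\ref{lm:focused}, and
\begin{equation*}
w_{\X}\lk\mS_0\rk=\frac1n\E\sum_{j=1}^n\lv X_j\rv=a.
\end{equation*}
This part holds for every $n$.

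\textbf{Radius.} Fix any $r>\rho_n$ and use the test with acceptance region $\mA_r=\{F_r>0\}$ or its complement $\{F_r<0\}$, according to the sign of $\nu:=\E\cos\lk\pi X/r\rk$. By the definition of $\rho_n$, we have $\lv\nu\rv\ge\sqrt{2\log10/n}$. Fact~\ref{prop:flip} is purely deterministic, so shifting by $r\vt$ with $\vt\in\mT_n\subset\lk2\mathbb{Z}+1\rk^n$ flips the sign of $F_r$. Consequently $\mA_r-r\mT_n=\{F_r<0\}$, and symmetrically $\{F_r<0\}-r\mT_n=\{F_r>0\}$.

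Assume $\nu>0$; the case $\nu<0$ is symmetric after replacing $F_r$ by $-F_r$. The variables $\cos\lk\pi X_j/r\rk$ are i.i.d.\ and take values in $[-1,1]$, so Hoeffding's inequality gives
\begin{equation*}
\Prob\lk F_r\lk\X\rk\le0\rk\le\exp\lk-\frac{n\nu^2}{2}\rk\le\frac1{10}.
\end{equation*}
This is exactly the argument of Fact~\ref{prop:concentration}, with the Gaussian characteristic function replaced by $\nu$. Both conditions in~\eqref{eq:X_detectable} then hold, so every $r>\rho_n$ is $\X$-detectable, and therefore $r_{\X}\lk\mT_n\rk\le\rho_n$.

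The main obstacle is making sure $\rho_n$ is finite, so that the set in its definition is nonempty. As $r\to\infty$, $\E\cos\lk\pi X/r\rk\to1$ by dominated convergence. Hence for $n>2\log10$ (so that $\sqrt{2\log10/n}<1$) the inequality holds for all large $r$, and $\rho_n<\infty$. This is where ``sufficiently large $n$'' enters. If $\rho_n=0$, every $r>0$ is detectable and the ratio bound is interpreted as $+\infty$. Otherwise, combining $r_{\X}\lk\mT_n\rk\le\rho_n$ with $\we_{\X}\lk\mT_n\rk=a$ gives the stated bound $a/\rho_n$.
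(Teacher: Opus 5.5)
Your proposal is correct and follows the paper's proof essentially step for step. It uses the same Rademacher-times-$|X|$ decomposition with Jensen and the witness set $n^{-1}\mQ_n$ for the focused width identity, and the same sign-adjusted cosine test with the deterministic sign flip and Hoeffding for $r>\rho_n$; your choice between $\{F_r>0\}$ and $\{F_r<0\}$ is exactly the paper's $\operatorname{sgn}(\nu_r)$ factor. Your dominated-convergence check that $\rho_n<\infty$ once $n>2\log 10$, and your treatment of the case $\rho_n=0$, are small additions that the paper leaves implicit.
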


We now specialize Theorem~\ref{thm:product} to product Laplace data. 
In this case, the distribution-dependent threshold is at most of order $n^{-1/4}$, 
and hence the ratio between the focused $\X$-width and the detectability radius is at least of order $n^{1/4}$.

\begin{corollary}\label{cor:laplace}
Let $\mu$ be the centered Laplace distribution with variance 1, namely
\begin{equation*}
        d\mu\lk x\rk= \frac{1}{\sqrt2}\exp\lk-\sqrt2\lv x\rv\rk\,dx.
\end{equation*}
Let $\X\sim\mu^{\otimes n}$, and let $\mT_n\subset\R^n$ be an origin-symmetric set satisfying~\eqref{eq:set}. 
Then, for all sufficiently large $n$,
\begin{equation*}
        \frac{\widetilde w_{\X}\lk\mT_n\rk}{r_{\X}\lk\mT_n\rk}
        \gtrsim
        n^{1/4}.
\end{equation*}
\end{corollary}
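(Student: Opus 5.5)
The plan is to apply Theorem~\ref{thm:product} directly. This leaves two quantities to compute: the first absolute moment $a=\E\lv X\rv$, and an upper bound on the threshold $\rho_n$ from~\eqref{eq:threshold}.

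For the Laplace density $\frac{1}{\sqrt2}e^{-\sqrt2\lv x\rv}$, $\lv X\rv$ is exponential with rate $\sqrt2$, so $a=1/\sqrt2$, a positive constant independent of $n$. By Theorem~\ref{thm:product}, $\we_{\X}\lk\mT_n\rk=1/\sqrt2$ for all large $n$, and it suffices to prove $\rho_n\lesssim n^{-1/4}$.

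The characteristic function of this Laplace law is $\E e^{\mathrm{i}sX}=\frac{2}{2+s^2}$. It is real and positive, so
\begin{equation*}
\E\cos\lk\frac{\pi X}{r}\rk=\frac{2}{2+\pi^2/r^2}=\frac{2r^2}{2r^2+\pi^2}.
\end{equation*}
This is increasing in $r$. Hence the condition in~\eqref{eq:threshold} holds for every $r\ge\rho$ as soon as it holds at $r=\rho$, and $\rho_n$ is the solution of
\begin{equation*}
\frac{2\rho^2}{2\rho^2+\pi^2}=\sqrt{\frac{2\log10}{n}}=:\delta_n .
\end{equation*}
Solving gives $\rho_n^2=\frac{\pi^2\delta_n}{2\lk1-\delta_n\rk}$. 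For large $n$ we have $\delta_n\le1/2$, so $\rho_n^2\le\pi^2\delta_n$. Therefore
\begin{equation*}
\rho_n\le \pi\lk\frac{2\log 10}{n}\rk^{1/4}\lesssim n^{-1/4}.
\end{equation*}

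Combining these with the conclusion of Theorem~\ref{thm:product} gives
\begin{equation*}
\frac{\we_{\X}\lk\mT_n\rk}{r_{\X}\lk\mT_n\rk}\ge \frac{a}{\rho_n}\ge \frac{1}{\sqrt2\,\pi\lk 2\log10\rk^{1/4}}\,n^{1/4}.
\end{equation*}
The only mildly delicate point is the monotonicity needed to pass from ``the inequality at one $\rho$'' to ``for every $r\ge\rho$''. The explicit formula settles this, so I expect no real obstacle; the essential contrast with the Gaussian case is the polynomial rather than exponential decay of the Laplace characteristic function.
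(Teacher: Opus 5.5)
Your proposal is correct and follows the paper's proof. You apply Theorem~\ref{thm:product} with $a=\E|X|=1/\sqrt2$, and you use the Laplace characteristic function and its monotonicity in $r$ to identify $\rho_n$ exactly. The only cosmetic difference is that you bound $\rho_n^2\le\pi^2\delta_n$, using $\delta_n\le 1/2$, to get an explicit constant, whereas the paper states the closed form $\rho_n=\frac{\pi}{\sqrt2}\bigl(\sqrt{n/(2\log 10)}-1\bigr)^{-1/2}$ and then writes $\lesssim n^{-1/4}$.
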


\subsection{Proof of Theorem~\ref{thm:product}}


We first prove the focused $\X$-width identity. 
The argument is the same as in Lemma~\ref{lm:focused}.
Since $\mu$ is symmetric and $\X\sim\mu^{\otimes n}$, we may write
\begin{equation*}
        \X\stackrel{d}{=}\veta\odot\pmb{\alpha},
\end{equation*}
where $\veta=\lk\eta_1,\ldots,\eta_n\rk$ is a random Rademacher vector independent of $\pmb{\alpha}$, and $\pmb{\alpha}$ has the same distribution as the coordinatewise absolute value of $\X$.  
In particular, for $j=1,\ldots,n$, one has $\E \alpha_j=a$.
For the upper bound, take $\mS_0=n^{-1}\mQ_n$.
As in the proof of Lemma~\ref{lm:focused}, this set satisfies the hitting condition~\eqref{eq:hit2} for every $\mT_n\subset\lk2\mathbb{Z}+1\rk^n$.
Therefore
\begin{equation}
        \we_{\X}\lk\mT_n\rk
        \le
        w_{\X}\lk\mS_0\rk=\frac{1}{n}\E\sup_{\veps\in\mQ_n}\lg \X,\veps\rg
        =\frac{1}{n}\E\sum_{j=1}^n \lv X_j\rv=a.
\end{equation}
For the lower bound, let $\mS$ be any origin-symmetric set satisfying the hitting condition for $\mT_n$, and define the support function
\begin{equation*}
        h_{\mS}\lk\x\rk:=\sup_{\vs\in\mS}\lg \x,\vs\rg.
\end{equation*}
Since $\mQ_n\subset\mT_n$, we have $h_{\mS}\lk\veps\rk\ge1$ for every $\veps\in\mQ_n$.
As in Lemma~\ref{lm:focused}, conditioning on $\veta$ and applying Jensen's inequality gives
\begin{equation*}
        \E_{\pmb{\alpha}}h_{\mS}\lk\veta\odot\pmb{\alpha}\rk
        \ge
        h_{\mS}\lk a\veta\rk=a h_{\mS}\lk\veta\rk\ge a.
\end{equation*}
Thus $w_{\X}\lk\mS\rk\ge a$.
Taking the infimum over all origin-symmetric sets $\mS$ satisfying the hitting condition~\eqref{eq:hit2} yields
\begin{equation}
        \we_{\X}\lk\mT_n\rk\ge a.
\end{equation}

It remains to prove the detectability estimate. 
Fix $r>\rho_{n}$, and set $\nu_r:=\E\cos\lk \frac{\pi X}{r}\rk$.
Define the signed testing function by
\begin{equation*}
        F_r\lk\x\rk
        :=
        \operatorname{sgn}\lk \nu_r\rk\sum_{j=1}^n\cos\lk\frac{\pi x_j}{r}\rk,
\end{equation*}
and let the acceptance region be
\begin{equation*}
        \mA_r:=\left\{\x\in\R^n: F_r\lk\x\rk>0\right\}.
\end{equation*}
As in Fact~\ref{prop:flip}, for every $\vt\in\lk2\mathbb{Z}+1\rk^n$, we obtain $F_r\lk\x+r\vt\rk=-F_r\lk\x\rk$,
and hence,
\begin{equation*}
        \mA_r-r\mT_n
        =
        \left\{\x\in\R^n:  F_r\lk\x\rk<0\right\}.
\end{equation*}
It remains only to estimate the probability of the event $\left\{  F_r\lk\X\rk\le 0\right\}$. 
The random variables $\left\{Z_j\right\}_{j=1}^n$, defined by $Z_j := \operatorname{sgn}\lk \nu_r\rk \cos\lk \frac{\pi X_j}{r}\rk$, are independent, take values in $[-1,1]$, and satisfy $\E Z_j=\lv \nu_r\rv$.
Therefore Hoeffding's inequality~\cite[Theorem 2.2.6]{Vershynin2018} gives
\begin{equation*}
\begin{aligned}
        \Prob\lk F_r\lk\X\rk\le0\rk
        &=\Prob\lk\sum_{j=1}^n\lk Z_j-\lv\nu_r\rv\rk\le -n\lv\nu_r\rv\rk  \\
        &\le \exp\lk-\frac{n\lv\nu_r\rv^2}{2}\rk
        \le \frac{1}{10},
\end{aligned}
\end{equation*}
where the last inequality follows from the definition of $\rho_n$ in~\eqref{eq:threshold}.
Consequently,
\begin{equation*}
        \Prob\left\{\X\in\mA_r\right\}\ge0.9,
        \qquad
        \Prob\left\{\X\in\mA_r-r\mT_n\right\}\le 0.1.
\end{equation*}
Thus every $r>\rho_{n}$ is $\X$-detectable for $\mT_n$, and hence
\begin{equation}\label{eq:rho}
        r_{\X}\lk\mT_n\rk\le \rho_{n}.
\end{equation}  

Combining~\eqref{eq:rho} with the focused $\X$-width identity $\we_{\X}\lk\mT_n\rk=a$ gives
\begin{equation*}
        \frac{\we_{\X}\lk\mT_n\rk}{r_{\X}\lk\mT_n\rk}
        \ge
        \frac{a}{\rho_{n}}.
\end{equation*}

\subsection{Proof of Corollary~\ref{cor:laplace}}

The characteristic function of the centered, variance 1 Laplace distribution $X$ is
\begin{equation*}
        \widehat\mu\lk u\rk=\frac{1}{1+u^2/2}.
\end{equation*}
Hence, for every $r>0$,
\begin{equation}\label{eq:mu}
        \E\cos\lk \frac{\pi X}{r}\rk=\widehat\mu\lk \frac{\pi}{r}\rk =\frac{1}{1+\pi^2/(2r^2)}.
\end{equation}
Since the quantity in~\eqref{eq:mu} is positive and increasing in $r$, the definition of $\rho_n$ in~\eqref{eq:threshold} gives, for all sufficiently large $n$,
\begin{equation}\label{eq:rho0}
        \rho_n=\frac{\pi}{\sqrt2}\lk\sqrt{\frac{n}{2\log 10}}-1\rk^{-1/2}\lesssim n^{-1/4}.
\end{equation}

Applying Theorem~\ref{thm:product} and the estimate~\eqref{eq:rho0}, we obtain
\begin{equation*}
        r_{\X}\lk\mT_n\rk\le\rho_n \lesssim n^{-1/4}.
\end{equation*}
Moreover, for the centered, variance 1 Laplace distribution, $\E\lv X\rv =\frac{1}{\sqrt2}$.
Thus, Theorem~\ref{thm:product} also gives $\we_{\X}\lk\mT_n\rk =\frac{1}{\sqrt2}$.
Consequently,
\begin{equation*}
        \frac{\widetilde w_{\X}\lk\mT_n\rk}{r_{\X}\lk\mT_n\rk}
        \gtrsim n^{1/4}.
\end{equation*}


\section{Summary}

This note establishes two separations between focused width and detectability radius.
In the Gaussian model, for every origin-symmetric perturbation set $\mT_n$ satisfying
\begin{equation*}
\mQ_n\subset \mT_n\subset \lk2\mathbb{Z}+1\rk^n,
\end{equation*}
the focused width can overestimate the detectability radius by at least a factor of order $\sqrt{\log n}$.
For the same class of perturbation sets, the product Laplace model exhibits a stronger phenomenon: the focused $\X$-width can exceed the detectability radius by at least a polynomial factor of order $n^{1/4}$.
Thus the logarithmic scale in the Gaussian result is not intrinsic to the adversarial fake detection problem itself.

The reason focused width fails in these examples is that it is a linear geometric quantity: it measures the complexity of linear witnesses satisfying the hitting condition, whereas the detectability radius is defined through arbitrary measurable tests, which may exploit nonlinear, nonconvex, or nonmonotone structure.
For the perturbation sets considered here, the relevant structure is arithmetic: all admissible shifts lie in the odd integer grid, allowing periodic tests to distinguish shifted samples at scales much smaller than the focused width.
This does not contradict the positive results of~\cite{MPV} in the highly symmetric regime, since the sets considered here, even $\mQ_n$, contain only fixed-magnitude sign vectors and no radial enlargements required by high symmetry.

The results of this note also suggest a broader difficulty: there may be no single universal mechanism for adversarial fake detection.
For highly symmetric sets, the methods of~\cite{MPV} identify the correct geometric scale; for the hypercube and odd integer grid considered here, detection is instead driven by a periodic test adapted to the arithmetic structure of the perturbations.
For general perturbation sets, however, it remains unclear how to construct tests that simultaneously capture their geometry, arithmetic structure, data distribution, and sample-dependent adversarial choices.
This absence of a universal testing mechanism is one of the main challenges in adversarial fake data detection, especially in modern settings where fake data can be generated adaptively.

\normalem
\bibliographystyle{plain}

\bibliography{ref}

\end{document}